\documentclass[12pt]{article}

\usepackage[utf8]{inputenc}
\usepackage{amsmath, amssymb}
\usepackage{graphicx}
\usepackage{cite}
\usepackage{geometry}
\usepackage{setspace}
\usepackage{hyperref}
\usepackage{amsthm}
\usepackage{natbib}
\usepackage{xcolor}
\usepackage{float}
\usepackage{tikz}
\usepackage{bm}
\usepackage[normalem]{ulem}

\newtheorem{theorem}{Theorem}
\newtheorem{lemma}{Lemma}
\newtheorem{remark}{Remark}

\newtheorem{proposition}{Proposition}

\newcommand{\RNum}[1]{\rm \uppercase\expandafter{\romannumeral #1\relax}}

\geometry{margin=1in}
\onehalfspacing

\title{Long- and short-time behavior of hypocoercive evolution equations with higher index via modal decompositions}
\author{Hannes Gernandt, Marco Roschkowski \\
\normalsize University of Wuppertal \\
\normalsize \texttt{\{gernandt, roschkowski\}@uni-wuppertal.com}}

\date{\today}

\begin{document}

\maketitle
\begin{abstract}
Hypocoercivity emerged in kinetic transport theory, allowing to derive exponential long-time estimates for evolution equations. Recently, the short-time asymptotics for 
equations with dissipative  generators were obtained using the hypocoercivity index that is in finite dimensions surprisingly given by a Kalman-type rank condition well-known in control theory. However, the situation for unbounded generators is only understood for index one if modal decompositions are available. Here, we prove long- and short-time estimates for unbounded generators with higher index admitting a modal decomposition. Additionally, an explicit Lyapunov functional is constructed. The result is applied to a class of port-Hamiltonian systems with distributed dissipation.
\end{abstract}

\textbf{Keywords:} Hypocoercivity, Hypocoercivity index, port-Hamiltonian systems, stability of distributed-parameter systems, Lyapunov methods, Vibration and modal analysis

\section{Introduction}

We consider the long- and short-time behavior of evolution equations that are modeled as abstract Cauchy problems in a Hilbert space $\mathcal{X}$ with inner product $\langle\cdot,\cdot\rangle$ of the form
\begin{equation}\label{eqn:abstract ode}
    \frac{\mathrm{d}}{\mathrm{d}t} x(t) = \bm{A}x(t),\quad x(0)=x_0.
\end{equation}
    To analyze the dynamics of \eqref{eqn:abstract ode}, it is often assumed that $\bm{A}$ generates a strongly continuous semigroup $(\bm{T}(t))_{t\ge 0}$. Then the solutions of \eqref{eqn:abstract ode} are given by $x(t)=\bm{T}(t)x_0$. The long-time behavior of infinite-dimensional systems with dissipation has attracted a lot of attention; see for example \cite{villani2009hypocoercivity, bastin2016stability}. Often, one is interested in exponential stability, which means that there are constants $C, \omega > 0$ such that
    \begin{equation}\label{eqn: intro long}
        \|\bm{T}(t)\| \le C\, \mathrm{e}^{-\omega t}.
    \end{equation}
The exponential stability~\eqref{eqn: intro long} as a long-time asymptotic  is typically obtained from Lyapunov's direct method which amounts to finding a bounded operator $\bm{P} \in \mathcal{B}(\mathcal{X})$ that is self-adjoint and positive, i.e.\ $\langle x, \bm{P}x \rangle > 0$ for all $x \in \mathcal{X}\setminus \{0\}$, with
\begin{equation}\label{eqn: lya general}
    \exists \sigma > 0:\ \langle \bm{P}x, \bm{A}x \rangle + \langle \bm{A}x, \bm{P}x \rangle \le -\sigma \|x\|^2.
\end{equation}

The long-time behavior is also studied in the context of \emph{hypocoercivity}, where operators $\bm{A}$ which are not coercive may still lead to exponentially stable semi-groups by equipping the underlying Hilbert space with a different norm~\cite{villani2009hypocoercivity}.

The concept of hypocoercivity emerged in the field of kinetic transport theory with the aim of extending exponential stability analysis in the absence of coercivity; see for example \cite{dolbeault2015hypocoercivity, herau2006hypocoercivity, liu2019hypocoercivity, achleitner2016linear, villani2009hypocoercivity} and the references therein.

 More recently, the short-time behavior of solutions of~\eqref{eqn:abstract ode} is also being studied, which is captured by the following estimates 
    \begin{equation}\label{eqn: intro short}
        1 - c_1 \, t^a \le \|\bm{T}(t)\| \le 1 - c_2\, t^a, \quad 0<t<\tau
    \end{equation}
    for some $\tau > 0$ and $a \in2\mathbb{N}_0+1$.
    Property~\eqref{eqn: intro short} has been thoroughly studied in \cite{achleitner2023hypocoercivity} in the finite-dimensional case and in \cite{achleitner2025hypocoercivity} in the bounded infinite-dimensional case.

To further analyze the short-time behavior for bounded $\bm{C}\in \mathcal{B}(\mathcal{X})$, we define the \emph{skew-Hermitian part}  $\bm{C}_{\mathrm{S}} = \frac{\bm{C} - \bm{C}^*}{2}$ and the \emph{Hermitian part} $\bm{C}_{\mathrm{H}} = \frac{\bm{C} + \bm{C}^*}{2}$ and say that $\bm{C}$ is \emph{accretive} if $\bm{C}_{\mathrm{H}} \ge 0$. 

The short-time behavior is related to the \emph{hypocoercivity index}  which was introduced for bounded accretive operators $\bm{C}\in \mathcal{B}(\mathcal{X})$ in Hilbert spaces in~\cite{achleitner2025hypocoercivity}. 
An accretive operator $\bm{C}$ is called \emph{hypocoercive} if there is $ m \in \mathbb{N}$ such that \begin{equation}\label{eqn: hypo defi}
        \sum\limits_{j=0}^m \bm{C}^j \bm{C}_{\mathrm{H}} (\bm{C}^*)^j \ge \kappa \bm{1}_{\mathcal{X}}, \quad \kappa > 0,
    \end{equation}
where $\bm{1}_{\mathcal{X}}$ is the identity operator and \eqref{eqn: hypo defi} is understood in the Loewner sense.
The smallest $m_{\mathrm{HC}} = m \in \mathbb{N}$ such that \eqref{eqn: hypo defi} holds is called the \emph{hypocoercivity index} (or short: \emph{index}) of $\bm{C}$.
Furthermore, in \cite{achleitner2025hypocoercivity} it is shown that \eqref{eqn: hypo defi} can be replaced by \begin{equation}\label{eqn: hypo second defi}
    \sum\limits_{j=0}^m (\bm{C}^*_{\mathrm{S}})^j \bm{C}_{\mathrm{H}} \bm{C}_{\mathrm{S}}^j \ge \tilde{\kappa} \bm{1}_{\mathcal{X}}, \quad \tilde{\kappa} > 0
\end{equation}
which also yields the same hypocoercivity index.

Surprisingly, the hypocoercivity index $m_{\mathrm{HC}}$ is related to the controllability index of the pair $(\bm{C}_{\mathrm{S}}, \bm{C}_{\mathrm{H}})$ in the finite-dimensional and bounded infinite-dimensional situation, see Proposition~2.1 and Appendix~B in \cite{achleitner2025hypocoercivity}.
Moreover, in a more general Hilbert space setting, the hypocoercivity index $m_{\mathrm{HC}}$ is related to the decay-rates in the short-time estimate~\eqref{eqn: intro short} given by $a=2m_{\mathrm{HC}}+1$. 
To prove this result, there it is used that the semigroup generated by $\bm{A}=-\bm{C}$ is given by the operator exponential $\mathrm{e}^{-\bm{C}t}=\sum_{j=0}^\infty (-\bm{C})^j \, \frac{t^j}{j!}$. This represents a significant obstacle when one tries to apply the existing theory to abstract Cauchy problems~\eqref{eqn:abstract ode} arising from PDEs where $\bm{A}$ is typically unbounded.

Regarding short-time estimates and the  hypocoercivity index for Cauchy problems~\eqref{eqn: class intro} with unbounded generators $\bm{A}$ only partial results cover this situation. As a first example, the Lorentz kinetic equation was studied in~\cite{achleitner2025hypocoercivity} which could be reduced to a family of bounded equations by considering it on a torus and using modal decompositions.

As pointed out in \cite{achleitner2025long}, this approach works more generally when a modal decomposition for \eqref{eqn:abstract ode} is known.
More precisely, suppose that $\mathcal{X} =  \oplus_{\eta \in E} \mathcal{H}$ for another Hilbert space $\mathcal{H}$ and \begin{align}\label{eqn: class intro}
\begin{split}    
    \bm{A}(x_\eta)_{\eta \in E} = (-\bm{C}_\eta x_\eta)_{\eta \in E},\quad D(\bm{A})=\big\{(x_\eta)_{\eta \in E} \in \mathcal{X}\, \big|\, \sum\limits_{\eta \in E} \|\bm{C}_\eta x_\eta\|^2 < \infty\big \}
\end{split}
\end{align}
given a countable set $E \subset \mathbb{R}\setminus (-1, 1)$.
Here, for a given bounded accretive operator $\bm{C}\in\mathcal{B}(\mathcal{H})$, the operator $\bm{C}_\eta$ is given by $\bm{C}_\eta = \eta \bm{C}_{\mathrm{S}} + \bm{C}_{\mathrm{H}}$ with $\bm{C}_{\mathrm{H}} \in \mathcal{B}(\mathcal{H})$ bounded, self-adjoint and accretive meaning that $\bm{C}_{\mathrm{H}} \ge 0$, $\bm{C}_{\mathrm{S}} \in \mathcal{B}(\mathcal{H})$ bounded and skew-adjoint. To prove long- and short-time asymptotics for the operator~\eqref{eqn: class intro}, it was assumed in~\cite{achleitner2025long} that the underlying accretive operator $\bm{C}$ has hypocoercivity index 
$m_{\mathrm{HC}} = 1$ and that the following technical assumptions hold
\begin{equation}\label{eqn: achleitner technical}
    \dim\ker \bm{C}_{\mathrm{H}} < \infty,\quad \exists \gamma>0: \, \bm{C}_{\mathrm{H}} \ge \gamma \bm{1}_{\left(\ker\bm{C}_{\mathrm{H}}\right)^\perp}.
\end{equation}
In the present work, we obtain long-time and short-time asymptotics of \eqref{eqn: class intro} when $\bm{C}$ has an arbitrary hypocoercivity index and without the technical assumptions \eqref{eqn: achleitner technical}. Furthermore, we derive an explicit solution of the Lyapunov inequality $\bm{P}$ satisfying \eqref{eqn: lya general} and $\bm{P} \ge \bm{1}_{\mathcal{H}}$. Moreover, the short-time bounds can be directly proven, without relying on long-time asymptotics.

A reasonably large class of systems with unbounded semigroup generator $\bm{A}$ which allows to incorporate the dissipation in the system coefficients are port-Hamiltonian (pH) systems. These are a special class of systems which are naturally associated with an energy norm and dissipative components.
This means that asymptotics of the norm have a natural physical interpretation as asymptotic behavior of the total energy contained in the system; see \cite{van2014port,jacob2012linear}.  That makes pH~systems a natural candidate to derive estimates of the form \eqref{eqn: intro short}.

    The long-time behavior of solutions in particular in terms of exponential stability and stabilizability for this class is well-understood; see \cite{VilZLGM09,mora2023exponential,waurick2022asymptotic,ramirez2014exponential,philipp2021minimizing}. 
Regarding the short-time behavior, it has recently been shown that pH systems with boundary damping do not obey a short-time estimate of the form~\eqref{eqn: intro short}, see \cite{roschkowski2025energymethodsdistributedporthamiltonian}. Here, we show that for a particular class of pH systems with distributed damping, it is indeed possible to obtain short-time estimates of the form \eqref{eqn: intro short}.

The remainder of this paper is structured as follows. In Section~\ref{sec: main}, we state our main result. The proof of our main result is given in Section~\ref{sec: proof}. We apply our main result to a class of pH systems with distributed  damping in Section~\ref{sec: app}.
\section*{Notations}
We will let $\mathcal{B}(\mathcal{X})$ denote the bounded operators in a Hilbert space $\mathcal{X}$ with inner product $\langle\cdot,\cdot\rangle$ and identity~$\bm{1}_\mathcal{X}$. Given $\bm{C} \in \mathcal{B}(\mathcal{H})$, we define the \emph{skew-Hermitian part}  $\bm{C}_{\mathrm{S}} = \frac{\bm{C} - \bm{C}^*}{2}$, where $\bm{C}^*$ is the adjoint operator and the \emph{Hermitian part} $\bm{C}_{\mathrm{H}} = \frac{\bm{C} + \bm{C}^*}{2}$ and say that $\bm{C}$ is \emph{accretive} if $\bm{C}_H \ge 0$. The domain of an operator $\bm{A}$ is denoted by $D(\bm{A})$. 
The \emph{direct sum} of a family of Hilbert spaces $(\mathcal{H}_\eta)_{\eta \in E}$ with $E$ being a set is  denoted by $\oplus_{\eta \in E} \mathcal{H}_\eta$. If $\bm{B} \in \mathcal{B}(\mathcal{H})$, we let $\mathrm{e}^{\bm{B}t} = \sum\limits_{j=0}^\infty \frac{t^j}{j!} \, \bm{B}^j$ denote the semigroup generated by $\bm{B}$.
Given any two self-adjoint operators $\bm{B}_1,\bm{B}_2\in \mathcal{B}(\mathcal{X})$, i.e.\ $\bm{B}_1=\bm{B}_1^*$ and $\bm{B}_2=\bm{B}_2^*$, then $\bm{B}_1\geq \bm{B}_2$ denotes that $\langle\bm{B}_1x,x\rangle\geq \langle\bm{B}_2x,x\rangle$ holds for all $x\in\mathcal{H}$.
Furthermore, for a~given $\eta \in \mathbb{R}$ we use the floor function $\lfloor \eta \rfloor = \max\{ m \in \mathbb{Z} \, |\, m \le \eta\}$. 

\section{Main Result}\label{sec: main}

In this paper, we consider for accretive $\bm{C} \in \mathcal{B}(\mathcal{H})$ the family of bounded operators for $\eta\in\mathbb{R}$ and $|\eta| \ge 1$ given by  
\[
\bm{C}_\eta = \eta\, \bm{C}_{\mathrm{S}} + \bm{C}_{\mathrm{H}}.
\]
For a countable set $E \subset \mathbb{R}$, we consider the direct sum $\mathcal{X} = \oplus_{\eta \in E} \mathcal{H}$ and \begin{equation}\label{eqn: ce}
        \bm{C}_E: {\mathcal{X}}\supseteq D({\bm{C}_E})  \rightarrow \mathcal{X},\, {\bm{C}_E}(x_\eta)_{\eta \in E} := (\bm{C}_\eta x_\eta)_{\eta \in E},
\end{equation}
where $D(\bm{C}_E) = \{(x_\eta)_{\eta\in E} \in \mathcal{X} \,|\, \sum\limits_{\eta \in E} \|\bm{C}_\eta x_\eta\|^2 < \infty \}$. 
If $\bm{C} \in \mathcal{B}(\mathcal{H})$ is hypocoercive with index $m_{\mathrm{HC}} \in \mathbb{N}$ and $\eta \neq 0$, then by replacing $\bm{C}_{\mathrm{S}}$ in \eqref{eqn: hypo second defi} by $\eta \bm{C}_{\mathrm{S}}$, we see that $\bm{C}_\eta$ is also hypocoercive with index $m_{\mathrm{HC}}$.

The class of equations we will analyze is of the form
\begin{equation}\label{eqn:class}
    \frac{\mathrm{d}}{\mathrm{d}t} x(t) = \bm{A}x(t) = -\bm{C}_E x(t),
\end{equation}
where $\bm{A} = - \bm{C}_E$ generates a strongly continuous semigroup $(\bm{T}_E(t))_{t \ge 0}$ on $\mathcal{X}$, since  it is the sum of the skew-adjoint operator $(x_\eta)_{\eta\in E}\mapsto(\eta \bm{C}_{\mathrm{S}}x_\eta)_{\eta\in E}$ and the bounded self-adjoint operator $(x_\eta)_{\eta\in E}\mapsto(\bm{C}_{\mathrm{H}}x_\eta)_{\eta\in E}$; see \cite[Chapter 3]{engel2000one}.

\begin{theorem}\label{thm:theorem}
 Let $\bm{C} \in \mathcal{B}(\mathcal{H})$ be hypocoercive  with index $m_{\mathrm{HC}}$ on a Hilbert space $\mathcal{H}$ and let $E \subset \mathbb{R}\setminus (-1,1)$ be non-empty and countable, consider the operator $\bm{C}_E$ given by~\eqref{eqn: ce} and the semigroup $(\bm{T}_E(t))_{t \ge 0}$ generated by $-\bm{C}_E$. Then the following holds:
    \begin{itemize}
        \item [\rm (a)]  The operator $\bm{P}_E$ defined by  \begin{equation}\label{lyapunov}
            \bm{P}_E\, x := \left(\sum\limits_{j = 0}^{m_\mathrm{HC}} \frac{1}{\eta^{2j}} (\bm{C}_\eta^*)^j\bm{C}_\eta^j\, x_\eta\right)_{\eta \in E}
        \end{equation}
        for $x = (x_\eta)_{\eta \in E} \in \mathcal{X}$ is bounded and self-adjoint, $\bm{P}_E \ge \bm{1}_{\mathcal{X}}$ and there exists $\sigma > 0$ such that 
         \[
        \langle -\bm{C}_E \, x, \bm{P}_E\, x \rangle + \langle \bm{P}_E \, x, -\bm{C}_E\, x \rangle
        \le -\sigma\, \|x\|^2
        \]
        for all $x \in D(\bm{C}_E)$. Hence, the semigroup $(\bm{T}_E(t))_{t \ge 0}$ is  exponentially stable.
        \item [\rm (b)] There are constants $c_1, c_2, \tau > 0$ and  $a = 2 m_{\mathrm{HC}} + 1$ such that
        \begin{align}
            \label{short_time}
        1 - c_1 t^a \le \|\bm{T}_E(t)\|^2 \le 1 - c_2 t^a, \quad t \le \tau.
                \end{align}
    \end{itemize}
\end{theorem}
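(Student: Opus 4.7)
The plan is to reduce both parts to fiber-wise statements on $\mathcal{H}$ and then to control the resulting constants uniformly in $\eta$. For part~(a), boundedness and self-adjointness of $\bm{P}_E$ are immediate: each fiber operator $\sum_{j=0}^{m_{\mathrm{HC}}}\eta^{-2j}(\bm{C}_\eta^*)^j\bm{C}_\eta^j$ is self-adjoint with norm uniformly bounded by $(m_{\mathrm{HC}}+1)(\|\bm{C}_{\mathrm{S}}\|+\|\bm{C}_{\mathrm{H}}\|)^{2m_{\mathrm{HC}}}$ for $|\eta|\ge 1$ (since $\|\bm{C}_\eta\|/|\eta|\le\|\bm{C}_{\mathrm{S}}\|+\|\bm{C}_{\mathrm{H}}\|/|\eta|$), and the bound $\bm{P}_E\ge\bm{1}_{\mathcal{X}}$ holds because the $j=0$ summand is the identity while the rest are positive semi-definite. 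For the Lyapunov inequality I would work fiber-wise and use the algebraic identity
\[
(\bm{C}_\eta^*)^{j+1}\bm{C}_\eta^j + (\bm{C}_\eta^*)^j\bm{C}_\eta^{j+1} = (\bm{C}_\eta^*)^j(\bm{C}_\eta+\bm{C}_\eta^*)\bm{C}_\eta^j = 2(\bm{C}_\eta^*)^j\bm{C}_{\mathrm{H}}\bm{C}_\eta^j,
\]
which holds because $\bm{C}_\eta$ commutes with its own powers. This transforms the left-hand side of the Lyapunov inequality into $-2\langle x_\eta, T_\eta x_\eta\rangle$, where
\[
T_\eta := \sum_{j=0}^{m_{\mathrm{HC}}}(\bm{D}_\eta^*)^j\bm{C}_{\mathrm{H}}\bm{D}_\eta^j, \qquad \bm{D}_\eta := \bm{C}_\eta/\eta = \bm{C}_{\mathrm{S}}+\bm{C}_{\mathrm{H}}/\eta,
\]
so the remaining task is the uniform lower bound $T_\eta\ge(\sigma/2)\bm{1}_{\mathcal{H}}$ for $|\eta|\ge 1$.

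To prove this uniform bound I would split the parameter range. Since $\bm{D}_\eta\to\bm{C}_{\mathrm{S}}$ in operator norm as $|\eta|\to\infty$, $T_\eta$ converges in norm to $T_\infty := \sum_{j=0}^{m_{\mathrm{HC}}}(\bm{C}_{\mathrm{S}}^*)^j\bm{C}_{\mathrm{H}}\bm{C}_{\mathrm{S}}^j$, which satisfies $T_\infty\ge\tilde\kappa\bm{1}$ by \eqref{eqn: hypo second defi} applied to $\bm{C}$. A standard norm-perturbation argument then yields a threshold $\eta_0$ with $T_\eta\ge(\tilde\kappa/2)\bm{1}$ for all $|\eta|\ge\eta_0$. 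For the compact range $|\eta|\in[1,\eta_0]$, the monotonicity $\eta^{2j}\ge 1$ implies $\sum_j\eta^{2j}(\bm{C}_{\mathrm{S}}^*)^j\bm{C}_{\mathrm{H}}\bm{C}_{\mathrm{S}}^j\ge\tilde\kappa\bm{1}$, so each $\bm{C}_\eta^*$ is accretive and hypocoercive with index $m_{\mathrm{HC}}$. By the equivalence of \eqref{eqn: hypo defi} and \eqref{eqn: hypo second defi} applied to $\bm{C}_\eta^*$, one obtains $\sum_j(\bm{C}_\eta^*)^j\bm{C}_{\mathrm{H}}\bm{C}_\eta^j\ge\kappa'_\eta\bm{1}$ with $\kappa'_\eta>0$, hence $T_\eta\ge|\eta|^{-2m_{\mathrm{HC}}}\kappa'_\eta\bm{1}$; continuity of $\kappa'_\eta$ in $\eta$ combined with compactness of $[1,\eta_0]\cup[-\eta_0,-1]$ furnishes the required uniform positive lower bound. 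Exponential stability of $(\bm{T}_E(t))_{t\ge 0}$ then follows by a standard Gr\"onwall estimate applied to $t\mapsto\langle\bm{P}_E\bm{T}_E(t)x,\bm{T}_E(t)x\rangle$.

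For part~(b), the lower bound in \eqref{short_time} is immediate: fixing any $\eta^*\in E$ and embedding its component into $\mathcal{X}$ gives $\|\bm{T}_E(t)\|\ge\|\mathrm{e}^{-\bm{C}_{\eta^*}t}\|$, and the bounded-operator lower estimate of \cite{achleitner2025hypocoercivity} applied to $\bm{C}_{\eta^*}$ produces a $c_1>0$ with $\|\bm{T}_E(t)\|^2\ge 1-c_1 t^{2m_{\mathrm{HC}}+1}$ for small $t$. The upper bound is the substantive part: since $\|\bm{T}_E(t)x\|^2=\sum_{\eta\in E}\|\mathrm{e}^{-\bm{C}_\eta t}x_\eta\|^2$, it is enough to establish $\|\mathrm{e}^{-\bm{C}_\eta t}x_\eta\|^2\le(1-c_2 t^{2m_{\mathrm{HC}}+1})\|x_\eta\|^2$ with $c_2,\tau>0$ uniform in $|\eta|\ge 1$. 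My approach is to start from the exact identity
\[
\|\mathrm{e}^{-\bm{C}_\eta t}x_\eta\|^2 = \|x_\eta\|^2 - 2\int_0^t \|\bm{C}_{\mathrm{H}}^{1/2}\mathrm{e}^{-\bm{C}_\eta s}x_\eta\|^2\,\mathrm{d}s,
\]
Taylor-expand the integrand up to order $2m_{\mathrm{HC}}$ in $s$, and bound the resulting quadratic form from below using the uniform hypocoercivity estimate $\sum_j\eta^{2j}(\bm{C}_{\mathrm{S}}^*)^j\bm{C}_{\mathrm{H}}\bm{C}_{\mathrm{S}}^j\ge\tilde\kappa\bm{1}$, absorbing the cross and remainder terms by Young's inequality for $t\le\tau$. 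The main obstacle I foresee is precisely this uniformity: $\|\bm{C}_\eta\|$ grows linearly in $|\eta|$, so a black-box application of \cite{achleitner2025hypocoercivity} to each fiber yields a shrinking radius of validity $\tau_\eta$, and securing a single $\tau>0$ valid throughout $E$ requires exploiting the explicit splitting $\bm{C}_\eta=\eta\bm{C}_{\mathrm{S}}+\bm{C}_{\mathrm{H}}$ rather than the operator norm alone.
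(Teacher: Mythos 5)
Your treatment of part (a) is correct and essentially coincides with the paper's: the same algebraic identity reduces the Lyapunov inequality to a uniform lower bound on $T_\eta=\sum_{j=0}^{m_{\mathrm{HC}}}\eta^{-2j}(\bm{C}_\eta^*)^j\bm{C}_{\mathrm{H}}\bm{C}_\eta^j$, and your two-regime argument (norm perturbation of $T_\eta$ around $T_\infty=\sum_j(\bm{C}_{\mathrm{S}}^*)^j\bm{C}_{\mathrm{H}}\bm{C}_{\mathrm{S}}^j$ for large $|\eta|$, continuity plus compactness on $1\le|\eta|\le\eta_0$) is exactly Lemma~\ref{lem: technical 1} of the paper, phrased slightly more cleanly at the level of the whole sum rather than via a maximizing index $\ell_y$. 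The lower bound in part (b) is also handled the same way as in the paper (restrict to one fiber and invoke the bounded-operator result).

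The upper bound in part (b) is where your proposal has a genuine gap, and it is precisely at the point you flag as ``the main obstacle'' without resolving it. A Taylor expansion of the energy identity around $t=0$ produces a leading negative term of size $\kappa\,\eta^{2\ell}t^{2\ell+1}$ and remainder terms of size $C\,(|\eta|t)^{2\ell+2}/|\eta|$; the ratio of remainder to leading term is of order $|\eta|t$, so no Young-type absorption can make the fiber estimate $\|\mathrm{e}^{-\bm{C}_\eta t}\|^2\le 1-c_2t^a$ hold on a time window independent of $\eta$ --- the expansion is only under control for $t\lesssim t_2/|\eta|$, and ``exploiting the splitting $\bm{C}_\eta=\eta\bm{C}_{\mathrm{S}}+\bm{C}_{\mathrm{H}}$'' does not change this, since every term beyond the leading one still carries powers of $\eta t$. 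The missing idea is the bootstrapping step of Lemma~\ref{lem: technical 2}: one first proves (as in Lemma~\ref{prop: many bounds}) the \emph{enhanced-rate} estimate $\|\mathrm{e}^{-\bm{C}_\eta t}\|^2\le 1-k_2|\eta|^{a-1}t^a$ on the shrinking window $t\le t_2/|\eta|$, and then uses submultiplicativity, $\|\bm{T}(t)\|^2\le\|\bm{T}(t/n)\|^{2n}$ with $n=\lfloor|\eta|\rfloor$, to obtain
\begin{equation*}
\|\mathrm{e}^{-\bm{C}_\eta t}\|^2\le\Bigl(1-k_2\,\tfrac{t^a}{n}\Bigr)^{n}\le \mathrm{e}^{-k_2t^a}\le 1-c_2t^a
\end{equation*}
on a window of length $t_2/2$ independent of $\eta$; the factor $|\eta|^{a-1}$ in the enhanced rate is exactly what survives the $n$-fold splitting $t\mapsto t/n$. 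Without this (or an equivalent device), your plan does not yield a single $\tau>0$ valid throughout $E$, and the claimed uniform fiber-wise bound is not established.
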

\begin{remark}
    The construction of $\bm{P}_E$ in Theorem~\ref{thm:theorem}~(a) is based on \cite{achleitner2025long}. There, it is noted for hypocoercive $\bm{C} \in \mathcal{B}(\mathcal{H})$ with index $m_{\mathrm{HC}}$ that the Lyapunov inequality~\eqref{eqn: lya general} is solved by 
    \[\bm{P} = \sum\limits_{j=0}^{m_{\mathrm{HC}}} (\bm{C}^*)^j \bm{C}^j\geq \bm{1}_{\mathcal{H}}.\] 
    
    We also point out that the proof of Theorem~\ref{thm:theorem} is constructive in the sense that the constants $\sigma, \tau, c_1, c_2>0$ can explicitly be determined in concrete situations.
\end{remark}
\section{Proof of the main result}\label{sec: proof}

First, we prove the following technical lemma which will be helpful in both deducing the long-time and the short-time asymptotic.
\begin{lemma}\label{lem: technical 1}
  Let $\bm{C} \in \mathcal{B}(\mathcal{H})$ be accretive and hypocoercive with index $m_{\mathrm{HC}} \in \mathbb{N}$. 
    Then the following statements hold.
    \begin{itemize}
        \item [\rm (a)] There exists $K>0$ such that for all $\ell \in \{1, \dots, m_{\mathrm{HC}}\}$ and all $|\eta| \ge 1$ it holds that 
        \begin{align*}
           \biggl\| (\bm{C}_\eta^*)^\ell \bm{C}_{\mathrm{H}} \bm{C}_\eta^\ell-  \eta^{2\ell}\, (\bm{C}_{\mathrm{S}}^*)^\ell \bm{C}_{\mathrm{H}} \bm{C}_{\mathrm{S}}^\ell\biggr\| \le K\, |\eta|^{2\ell - 1}.  \end{align*}
        \item [\rm (b)] There exists $\kappa_1>0$ such that for all $y \in \mathcal{H}$, there is $\ell_y \in \{0, \dots, m_{\mathrm{HC}}\}$ with
        \[
        \left\langle \bm{C}_{\mathrm{S}}^{\ell_y} y, \bm{C}_{\mathrm{H}}\bm{C}_{\mathrm{S}}^{\ell_y} y \right\rangle
        \ge \kappa_1 \|y\|^2.
        \]
        \item [\rm (c)] There exists $R\ge1$ such that for all $y \in \mathcal{H}$ and with $\ell_y$ chosen as in item \rm {(b)} it holds for all $|\eta| \ge R$
        \[
        \left\langle \bm{C}_\eta^{\ell_y} y, \bm{C}_{\mathrm{H}} \bm{C}_\eta^{\ell_y}y \right\rangle \ge \frac{1}{2}\, \eta^{2\ell_y}\left\langle \bm{C}_{\mathrm{S}}^{\ell_y} y, \bm{C}_{\mathrm{H}}\bm{C}_{\mathrm{S}}^{\ell_y} y \right\rangle .
        \]
        \item [\rm (d)] For all $R\geq 1$ there exists $\mu_R>0$ such that for all $y \in \mathcal{H}$,
        \[
        \inf\limits_{1 \le |\eta| \le R}\sum\limits_{j=0}^{m_{\mathrm{HC}}}\frac{1}{\eta^{2j}} \left\langle \bm{C}_\eta^{j} y, \bm{C}_{\mathrm{H}} \bm{C}_\eta^{j}y \right\rangle \ge \mu_R \, \|y\|^2.
        \]
        
    \end{itemize}
\end{lemma}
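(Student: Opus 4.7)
I plan to treat (a)--(d) in order, with each part feeding the next. For part~(a), I would expand the non-commuting products $\bm{C}_\eta^\ell = (\eta\bm{C}_{\mathrm{S}} + \bm{C}_{\mathrm{H}})^\ell$ and $(\bm{C}_\eta^*)^\ell = (-\eta\bm{C}_{\mathrm{S}} + \bm{C}_{\mathrm{H}})^\ell$ (using $\bm{C}_{\mathrm{S}}^* = -\bm{C}_{\mathrm{S}}$ and $\bm{C}_{\mathrm{H}}^* = \bm{C}_{\mathrm{H}}$) as sums over all words of length $\ell$ in the two non-commuting letters. The unique word with maximal $\eta$-power in each is $\eta^\ell \bm{C}_{\mathrm{S}}^\ell$ respectively $(-\eta)^\ell \bm{C}_{\mathrm{S}}^\ell$; every other word carries a power $|\eta|^{\le \ell-1}$. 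When one multiplies $(\bm{C}_\eta^*)^\ell\bm{C}_{\mathrm{H}}\bm{C}_\eta^\ell$ out, the product of the two leading words is $(-1)^\ell\eta^{2\ell}\bm{C}_{\mathrm{S}}^\ell\bm{C}_{\mathrm{H}}\bm{C}_{\mathrm{S}}^\ell$, which equals $\eta^{2\ell}(\bm{C}_{\mathrm{S}}^*)^\ell\bm{C}_{\mathrm{H}}\bm{C}_{\mathrm{S}}^\ell$ since $(\bm{C}_{\mathrm{S}}^*)^\ell = (-1)^\ell\bm{C}_{\mathrm{S}}^\ell$. After this cancellation the remainder is a polynomial in $\eta$ of degree at most $2\ell-1$ with operator-valued coefficients bounded in terms of $\|\bm{C}_{\mathrm{S}}\|$ and $\|\bm{C}_{\mathrm{H}}\|$; for $|\eta|\ge 1$ its norm is dominated by $K|\eta|^{2\ell-1}$ with $K$ uniform over $\ell\in\{1,\ldots,m_{\mathrm{HC}}\}$.

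Part~(b) reduces to a pigeonhole step on~\eqref{eqn: hypo second defi}: pairing against $y$ and using $\langle y,(\bm{C}_{\mathrm{S}}^*)^j\bm{C}_{\mathrm{H}}\bm{C}_{\mathrm{S}}^j y\rangle = \langle \bm{C}_{\mathrm{S}}^j y,\bm{C}_{\mathrm{H}}\bm{C}_{\mathrm{S}}^j y\rangle \ge 0$, the $m_{\mathrm{HC}}+1$ non-negative summands total at least $\tilde{\kappa}\|y\|^2$, so at least one of them is $\ge \tilde{\kappa}/(m_{\mathrm{HC}}+1)\|y\|^2$; set $\kappa_1 = \tilde{\kappa}/(m_{\mathrm{HC}}+1)$ and let $\ell_y$ be that index. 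Part~(c) is then nearly immediate: substituting the identity of~(a) at $\ell = \ell_y$ into the quadratic form gives $\langle \bm{C}_\eta^{\ell_y}y,\bm{C}_{\mathrm{H}}\bm{C}_\eta^{\ell_y}y\rangle \ge \eta^{2\ell_y}\langle \bm{C}_{\mathrm{S}}^{\ell_y}y,\bm{C}_{\mathrm{H}}\bm{C}_{\mathrm{S}}^{\ell_y}y\rangle - K|\eta|^{2\ell_y-1}\|y\|^2$, and combining with the bound $\kappa_1\|y\|^2$ from~(b) shows the error is at most half the leading term precisely when $|\eta|\ge 2K/\kappa_1$, so $R := \max(1,\,2K/\kappa_1)$ works.

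Part~(d) is the main obstacle and I would handle it by a compactness-continuity argument rather than by further explicit expansion. Set $\bm{M}_\eta := \sum_{j=0}^{m_{\mathrm{HC}}}(\bm{C}_\eta^*)^j\bm{C}_{\mathrm{H}}\bm{C}_\eta^j$; since $\bm{C}_\eta^* = \bm{C}_{-\eta}$, this equals $\sum_j\bm{C}_{-\eta}^j\bm{C}_{\mathrm{H}}(\bm{C}_{-\eta}^*)^j$, and the hypocoercivity of $\bm{C}_{-\eta}$ with index $m_{\mathrm{HC}}$ (inherited from $\bm{C}$ exactly as observed just before Theorem~\ref{thm:theorem}) together with~\eqref{eqn: hypo defi} applied to $\bm{C}_{-\eta}$ yields $\bm{M}_\eta \ge \kappa(\eta)\bm{1}_{\mathcal{H}}$ with $\kappa(\eta) > 0$ for every $\eta \ne 0$. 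Because $\eta\mapsto\bm{M}_\eta$ is polynomial in $\eta$ with bounded operator coefficients, it is norm-continuous; hence $\eta\mapsto\kappa(\eta) = \inf\sigma(\bm{M}_\eta)$ is continuous on the compact set $\{1\le|\eta|\le R\}$ and, being strictly positive there, attains a positive minimum $\kappa_{\min}$. Finally, $|\eta|\ge 1$ gives $\eta^{-2j}\ge\eta^{-2m_{\mathrm{HC}}}$ for $0\le j\le m_{\mathrm{HC}}$, so the sum in~(d) is bounded below by $\eta^{-2m_{\mathrm{HC}}}\langle\bm{M}_\eta y,y\rangle \ge R^{-2m_{\mathrm{HC}}}\kappa_{\min}\|y\|^2$, yielding $\mu_R := R^{-2m_{\mathrm{HC}}}\kappa_{\min}$. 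The delicate step is precisely upgrading the pointwise positivity of $\bm{M}_\eta$ (available individually for each $\eta\ne 0$) to a uniform positive lower bound, which continuity of the bottom of the spectrum combined with compactness of the $\eta$-range is what makes possible.
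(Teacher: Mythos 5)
Your proposal is correct and follows essentially the same route as the paper's proof in all four parts: the same expansion of $(\bm{C}_\eta^*)^\ell\bm{C}_{\mathrm{H}}\bm{C}_\eta^\ell$ into a leading term plus a degree-$(2\ell-1)$ operator polynomial for (a), the same pigeonhole on \eqref{eqn: hypo second defi} for (b), the same combination of (a) and (b) with $R=\max(1,2K/\kappa_1)$ for (c), and the same compactness--continuity argument on $\kappa(\eta)=\inf\sigma\bigl(\sum_{j}(\bm{C}_\eta^*)^j\bm{C}_{\mathrm{H}}\bm{C}_\eta^j\bigr)$ combined with $\eta^{2j}\le R^{2m_{\mathrm{HC}}}$ for (d). The only difference is cosmetic: you justify the positivity and continuity of $\kappa(\eta)$ in (d) a bit more explicitly (via the hypocoercivity of $\bm{C}_{-\eta}$ and norm-continuity of $\eta\mapsto\bm{M}_\eta$) than the paper, which simply asserts these facts.
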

\begin{proof}
    To prove assertion (a), note that
    \[
    (\bm{C}_\eta^*)^\ell \bm{C}_{\mathrm{H}} \bm{C}_\eta^\ell =  (\eta \, \bm{C}_{\mathrm{S}}^*)^\ell \bm{C}_{\mathrm{H}} (\eta \, \bm{C}_{\mathrm{S}})^\ell + \sum\limits_{i=0}^{2\ell-1}\eta^i \bm{B}_i
    \]
    for some bounded $\bm{B}_i \in \mathcal{B}(\mathcal{H})$. Moreover, \[
   \frac{1}{\eta^{2\ell}} \biggl\| \sum\limits_{i=0}^{2\ell-1}\eta^i \bm{B}_i \biggr\| \le \frac{2\, m_{\mathrm{HC}}}{|\eta|} \cdot \max\limits_{i\in \{ 0, \dots, 2\ell-1 \}} \|\bm{B}_i\|
    \]
    which implies there is $K>0$ satisfying the bound in \rm (a).
    
    To prove {\rm (b)}, we use a trivial relation between the sum and the maximal summand together with the hypocoercivity of $\bm{C}$ which implies with \eqref{eqn: hypo second defi} that there is $\kappa > 0$ such that 
    \begin{align*}
        \max\limits_{\ell = 0, \dots, m_{\mathrm{HC}}} \left\langle \bm{C}_{\mathrm{S}}^\ell y, \bm{C}_{\mathrm{H}}\bm{C}_{\mathrm{S}}^\ell y \right\rangle\geq \tfrac{1}{m_{\mathrm{HC}} + 1}\sum\limits_{j=0}^{m_{\mathrm{HC}}} \left\langle \bm{C}_{\mathrm{S}}^j y, \bm{C}_{\mathrm{H}}\bm{C}_{\mathrm{S}}^j y \right\rangle\geq \frac{\kappa}{m_{\mathrm{HC}} + 1}\|y\|^2=:\kappa_1 \|y\|^2.
    \end{align*}
    To prove (c), we estimate with (a) and (b) \begin{align*}
            \left\langle \bm{C}_\eta^{\ell_y} y, \bm{C}_{\mathrm{H}} \bm{C}_\eta^{\ell_y}y \right\rangle &\ge \eta^{2\ell_y} \, \left\langle \bm{C}_{\mathrm{S}}^{\ell_y} y, \bm{C}_{\mathrm{H}}\bm{C}_{\mathrm{S}}^{\ell_y} y \right\rangle \, - K |\eta|^{2\ell_y - 1} \, \|y\|^2 \\
            &\ge \left(\eta^{2\ell_y} -  \frac{K} {\kappa_1} \, |\eta|^{2\ell_y - 1} \right) \, \left\langle \bm{C}_{\mathrm{S}}^{\ell_y} y, \bm{C}_{\mathrm{H}}\bm{C}_{\mathrm{S}}^{\ell_y} y \right\rangle \\
            & \ge \frac{1}{2} \, \eta^{2\ell_y} \, \left\langle \bm{C}_{\mathrm{S}}^{\ell_y}y, \bm{C}_{\mathrm{H}}\bm{C}_{\mathrm{S}}^{\ell_y} y \right\rangle 
        \end{align*}
        for $|\eta| \ge R:= \max\left(\frac{2 K}{\kappa_1}, 1\right) \ge 1$. 
        To prove item (d), assume that $R \ge 1$ is arbitrary and define \[
        \kappa(\eta) := \sup\left\{ \kappa \ge 0 \, \bigg|\, \sum\limits_{j=0}^{m_{\mathrm{HC}}} (\bm{C}_\eta^*)^j \bm{C}_{\mathrm{H}}  \bm{C}_\eta^j \ge \kappa \, \bm{1}_{\mathcal{H}} \right\}
        \]
        for $1 \le |\eta| \le R$. It is clear that $\eta \mapsto \kappa(\eta)$ defines a~continuous map on the compact set $\{\eta \in \mathbb{R} \, |\, 1\le |\eta| \le R\}$ with $\kappa(\eta) > 0$ for each $1 \le |\eta| \le R$. To conclude the proof, set \[\mu_R := \frac{1}{R^{2m_{\mathrm{HC}}}} \inf\limits_{1\le |\eta| \le R} \kappa(\eta) > 0\] where we used $\eta^{2j} \le R^{2m_{\mathrm{HC}}}$ for $|\eta| \le R$ since $R \ge 1$.
\end{proof}

\subsection{Proof Theorem~\ref{thm:theorem}~(a)}
    Note that $\|\bm{C}_\eta\| = \|\bm{C}_{\mathrm{H}}\| + |\eta| \|\bm{C}_{\mathrm{S}}\| \le L \, |\eta|$ with the constant $L:=\|\bm{C}_{\mathrm{H}}\| +\|\bm{C}_{\mathrm{S}}\| $. It follows that \[ \biggl\|\sum\limits_{j=0}^{m_{\mathrm{HC}}}\frac{1}{\eta^{2j}} (\bm{C}_\eta^*)^j \bm{C}_\eta^j\biggr\| \le (m_{\mathrm{HC}} + 1) \max(1, L^{2\,m_{ \mathrm{HC}}}),
    \]
 which implies that $\bm{P}_E$ is bounded. It is also clear that $\bm{P}_E$ is self-adjoint. 
    To see that $\bm{P}_E$ is bounded from below, note that \[\sum\limits_{j = 0}^{m_\mathrm{HC}} \frac{1}{\eta^{2j}} \langle x, (\bm{C}_\eta^*)^j\bm{C}^j_\eta \, x \rangle \ge \|\bm{C}^0_\eta x_\eta\|^2 = \|x_\eta\|^2\] for $x\in \mathcal{X}$
    which implies $\bm{P}_E \ge \bm{1}_{\mathcal{H}}$.

    Now fix $\kappa_1 > 0$ as in item \rm (b), $R \ge 1$ as in item \rm (c) and $\mu_R > 0$ as in item \rm (d) of Lemma~\ref{lem: technical 1}.
    Given $x \in D(\bm{P}_E)$, \begin{align*}
            \langle -\bm{C}_E \, x, \bm{P}_E \, x \rangle + \langle \bm{P}_E \, x, -\bm{C}_E \, x \rangle& = \sum\limits_{\eta \in E} \sum\limits_{j=0}^{m_{\mathrm{HC}}} \frac{1}{\eta^{2j}} \left\langle \bm{C}_\eta^j x_\eta, -(\bm{C}_\eta^* + \bm{C}_\eta) \bm{C}_\eta^j x_\eta \right \rangle\\
            & = -\sum\limits_{\eta \in E} \sum\limits_{j=0}^{m_{\mathrm{HC}}} \frac{2}{\eta^{2j}} \left\langle \bm{C}_\eta^j x_\eta, \bm{C}_{\mathrm{H}} \bm{C}_\eta^j x_\eta \right \rangle.
    \end{align*}
    Splitting this sum into terms where $|\eta| \ge R$ and terms where $|\eta| < R$, we obtain
    \begin{equation}\label{eqn: long tech 1}
    \begin{split}
    \langle -\bm{C}_E \, x, \bm{P}_E \, x \rangle + \langle \bm{P}_E \, x, -\bm{C}_E \, x \rangle&  \le 
    -2\, \mu_R \, \sum\limits_{\eta \in E, |\eta|< R}  \|x_\eta\|^2  - \sum\limits_{\eta \in E, |\eta|\ge R} \left\langle \bm{C}_{\mathrm{S}}^{\ell_{x_\eta}} x_\eta, \bm{C}_{\mathrm{H}}\bm{C}_{\mathrm{S}}^{\ell_{x_\eta}} x_\eta \right\rangle \\
    &=  -2\, \mu_R \, \sum\limits_{\eta \in E, |\eta|< R}  \|x_\eta\|^2 - \kappa_1 \, \sum\limits_{\eta \in E, |\eta|\ge R}  \|x_\eta\|^2 \\
    & \le\qquad -\sigma\,  \|x\|_{\mathcal{X}}^2, 
    \end{split}
    \end{equation}
with $\sigma := \min(\kappa_1, 2\, \mu_R)$,  where we used items (c) and (d) of Lemma~\ref{lem: technical 1} in the first estimate and item (b) in the second estimate. This shows that $(\bm{T}_E(t))_{t \ge 0}$ is exponentially stable as its generator $-\bm{C}_E$ satisfies a Lyapunov inequality, see \cite[Theorem 8.1.3]{jacob2012linear}.

\subsection{Proof of the upper bound in Theorem~\ref{thm:theorem}~(b)}
\begin{lemma}\label{prop: many bounds}
Let $\bm{C} \in \mathcal{B}(\mathcal{H})$ be hypocoercive with index $m_{\mathrm{HC}} \in \mathbb{N}$ and let $a = 2m_{\mathrm{HC}}+1$. Then there exist $k_2, t_2 > 0$ independent of $\eta$ such that
    \[
    \bigr\|\mathrm{e}^{- \bm{C}_\eta t} \bigl\|^2 \le 1 - k_2 \,|\eta|^{a-1}\, t^a, \quad \text{for all $t\le \frac{t_2}{|\eta|}$.}
    \]
\end{lemma}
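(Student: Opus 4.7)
The plan is to start from the identity
\[
\|\mathrm{e}^{-\bm{C}_\eta t}x\|^2 = \|x\|^2 - 2\int_0^t \langle \bm{C}_{\mathrm{H}}\, y(s), y(s)\rangle\, ds,\qquad y(s):= \mathrm{e}^{-\bm{C}_\eta s}x,
\]
obtained by differentiating $\|y(s)\|^2$ and using that the skew-Hermitian part $\eta\bm{C}_{\mathrm{S}}$ does not contribute. For a unit vector $x$, I would let $\ell_x\in\{0,\ldots,m_{\mathrm{HC}}\}$ be the index supplied by Lemma~\ref{lem: technical 1}(b) and aim to bound this integral below by a multiple of $\eta^{2\ell_x}t^{2\ell_x+1}$; the uniform exponent $a=2m_{\mathrm{HC}}+1$ then drops out from the elementary inequality $(|\eta|t)^{2\ell_x}\ge (|\eta|t)^{2m_{\mathrm{HC}}}$, valid whenever $|\eta|t\le 1$ and $\ell_x\le m_{\mathrm{HC}}$.

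To bring out the $(|\eta|s)^{2\ell_x}$ behaviour inside the integrand, I would Taylor-expand $y(s)=P_{\ell_x}(s)+R_{\ell_x}(s)$ with
\[
P_{\ell_x}(s):=\sum_{k=0}^{\ell_x}\frac{(-s)^k}{k!}\bm{C}_\eta^k x,\qquad \|R_{\ell_x}(s)\|\le \frac{(s\|\bm{C}_\eta\|)^{\ell_x+1}}{(\ell_x+1)!}\,\mathrm{e}^{s\|\bm{C}_\eta\|}\le C_1(|\eta|s)^{\ell_x+1},
\]
the last step valid uniformly for $|\eta|s\le 1$ because $\|\bm{C}_\eta\|\le L|\eta|$ with $L:=\|\bm{C}_{\mathrm{H}}\|+\|\bm{C}_{\mathrm{S}}\|$. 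A polynomial-norm equivalence on the finite-dimensional space of $\mathcal{H}$-valued polynomials of degree $\le m_{\mathrm{HC}}$, applied after rescaling $s=t\tau$, supplies a constant $c_p>0$ depending only on $m_{\mathrm{HC}}$ with
\[
\int_0^t \|\bm{C}_{\mathrm{H}}^{1/2}P_{\ell_x}(s)\|^2\, ds \ge c_p\, t^{2\ell_x+1}\,\langle \bm{C}_\eta^{\ell_x}x,\bm{C}_{\mathrm{H}}\bm{C}_\eta^{\ell_x}x\rangle.
\]

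The inner product on the right is now bounded below by a uniform multiple of $\eta^{2\ell_x}$: for $|\eta|\ge R$, Lemma~\ref{lem: technical 1}(c) combined with (b) gives $\tfrac{\kappa_1}{2}\eta^{2\ell_x}$; for $1\le|\eta|\le R$ the same form of bound follows from Lemma~\ref{lem: technical 1}(d) after reselecting $\ell_x$ to be the argmax over $j$ of $\eta^{-2j}\langle \bm{C}_\eta^jx,\bm{C}_{\mathrm{H}}\bm{C}_\eta^jx\rangle$. Combining this with the Young-type inequality $\|\bm{C}_{\mathrm{H}}^{1/2}(P+R)\|^2 \ge \tfrac12\|\bm{C}_{\mathrm{H}}^{1/2}P\|^2 - 2\|\bm{C}_{\mathrm{H}}^{1/2}R\|^2$, the remainder contributes at most a term of order $\eta^{2\ell_x+2}t^{2\ell_x+3}$, which is subdominant to the main term once $(|\eta|t)^2$ is small enough. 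This produces $t_2>0$ uniform in $\eta$ and $x$ so that for $t\le t_2/|\eta|$,
\[
\int_0^t\|\bm{C}_{\mathrm{H}}^{1/2}\mathrm{e}^{-\bm{C}_\eta s}x\|^2\, ds \ge k'\,\eta^{2\ell_x}t^{2\ell_x+1}\ge k'\,|\eta|^{a-1}t^a,
\]
where the last inequality uses $|\eta|t\le t_2\le 1$ and $\ell_x\le m_{\mathrm{HC}}$; taking the supremum over unit vectors $x$ then yields the claimed bound with $k_2=2k'$.

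The main obstacle is the bookkeeping: keeping the constants in the polynomial-norm equivalence, in the split between $|\eta|\ge R$ and $1\le|\eta|\le R$, and in the Taylor remainder all compatible so that $k_2$ can be chosen independently of $x$ and of $\eta$. Beyond this, the argument only combines ingredients already assembled in Lemma~\ref{lem: technical 1}.
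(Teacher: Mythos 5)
Your argument is correct, but it takes a genuinely different route from the paper. The paper expands $\|\mathrm{e}^{-\bm{C}_\eta t}y\|^2-1$ via the combinatorial identity of Lemma~4.7 in \cite{achleitner2025hypocoercivity}, which exhibits the coefficient of $t^{2\ell+1}$ as an explicit positive multiple of $\langle\bm{C}_\eta^{\ell}y,\bm{C}_{\mathrm{H}}\bm{C}_\eta^{\ell}y\rangle$ with all lower-order contributions manifestly nonpositive, and then controls the tail $\sum_{j\ge 2\ell+2}$ by $C|\eta|^{2\ell+1}t^{2\ell+2}$. You instead start from the dissipation identity $\|\mathrm{e}^{-\bm{C}_\eta t}x\|^2=\|x\|^2-2\int_0^t\|\bm{C}_{\mathrm{H}}^{1/2}\mathrm{e}^{-\bm{C}_\eta s}x\|^2\,\mathrm{d}s$, Taylor-expand the orbit to order $\ell_x$, and extract the leading coefficient via a Gram-matrix lower bound. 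Both proofs then converge to the same core: the case split $|\eta|\ge R$ versus $1\le|\eta|\le R$ handled by Lemma~\ref{lem: technical 1}(b)--(d), subdominance of the remainder because its ratio to the main term is of order $(|\eta|t)^2\le t_2^2$, and the exponent-matching step $\eta^{2\ell}t^{2\ell+1}\ge|\eta|^{a-1}t^a$ for $|\eta|t\le 1$, $\ell\le m_{\mathrm{HC}}$. What your route buys is self-containedness: it replaces the imported series identity by elementary facts, at the cost of having to justify the polynomial lower bound yourself. One wording issue there: the space of $\mathcal{H}$-valued polynomials of degree $\le m_{\mathrm{HC}}$ is \emph{not} finite-dimensional when $\mathcal{H}$ is infinite-dimensional, so the ``norm equivalence'' should instead be phrased as positive-definiteness of the scalar Hilbert matrix $\bigl(\tfrac{1}{j+k+1}\bigr)_{j,k=0}^{\ell}$ acting on the coefficient vector, i.e.\ $\int_0^1\bigl\|\sum_k a_k\tau^k\bigr\|^2\,\mathrm{d}\tau\ge\lambda_{\min}\sum_k\|a_k\|^2\ge\lambda_{\min}\|a_\ell\|^2$, which yields a constant depending only on $\ell\le m_{\mathrm{HC}}$ and not on $\mathcal{H}$. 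With that rephrasing, and keeping track of the factor $1/(\ell_x!)^2$ from the leading Taylor coefficient, all constants are uniform in $x$ and $\eta$ as you claim, and the conclusion follows.
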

\begin{proof}
Fix $\kappa_1 > 0$ as in item \rm (b), $R \ge 1$ as in item \rm (c) and $\mu_R > 0$ as in item \rm (d) of Lemma~\ref{lem: technical 1} and $y \in \mathcal{H}$ with $\|y\| = 1$.
Then Lemma~4.7 of \cite{achleitner2025hypocoercivity} with $\bm{U} = - \bm{C}^*_\eta, \bm{V} = - 2\, \bm{C}_{\mathrm{H}}, \bm{W} = -\bm{C}_\eta$ 
implies for all $\ell \in \{1, \dots, m_{\mathrm{HC}}\}$ that
\begin{align}
\nonumber
    \frac{\bigr\|\mathrm{e}^{- \bm{C}_\eta t} y\bigl\|^2 - 1}{2}
    &= - \sum_{j=0}^{\ell-1} \frac{t^{2j+1}}{(2j+1)!}\,\frac{1}{\binom{2j}{j}}
       \left\langle \bm{V}_{\eta,j} y,\; \bm{C}_{\mathrm{H}}\,\bm{V}_{\eta,j} y \right\rangle \\
    &\quad - \frac{t^{2\ell+1}}{(2\ell+1)!}\,\binom{2\ell}{\ell}\,
       \Delta^{(\ell)}_{2\ell+1,\ell}
       \left\langle \bm{C}_\eta^{\ell} y,\; \bm{C}_{\mathrm{H}}\,\bm{C}_\eta^{\ell} y \right\rangle \label{second term} \\
    &\quad + \sum_{j=2\ell+2}^{\infty} \frac{t^{j}}{j!}
       \sum_{k=\ell}^{j-\ell-1} \binom{j-1}{k}\,\Delta^{(\ell)}_{j,k}  \cdot \left\langle \big(-\bm{C}_\eta\big)^{k} y,\; \bm{C}_{\mathrm{H}}\,\big(-\bm{C}_\eta\big)^{\,j-k-1} y \right\rangle ,\label{eqn:magic formula}
\end{align}
 where $\bm{V}_{\eta, j} = \sum\limits_{k=0}^\infty  \frac{(2j+1)!}{(k+2j+1)!}\binom{k+j}{j}t^k (-\bm{C}_\eta)^{k+j}$  and $\Delta^{(\ell)}_{j,k} =  \frac{\binom{k}{\ell} \, \binom{j-k-1}{\ell}}{\binom{k+\ell}{\ell} \binom{j-k-1+\ell}{\ell}}.$
 Formula \eqref{eqn:magic formula} is also true for $\ell = 0$ by the identity (A.6) in \cite{achleitner2023hypocoercivity} which we repeat here:
 \begin{align*}
     \sum\limits_{k=0}^j &\binom{j}{k}(-\bm{C}_\eta)^k(-\bm{C}_\eta)^{j-k} =  2 \sum\limits_{k=0}^{j-1} (-\bm{C}_\eta)^k\bm{C}_{\mathrm{H}}(-\bm{C}_\eta)^{j-k-1}.
 \end{align*}
We must be careful because $\bigl\langle \bm{C}_\eta^\ell y, \bm{C}_{\mathrm{H}} \bm{C}_\eta^\ell y\bigr \rangle$ in \eqref{second term} is only expected to become large for some $\ell$. So we need to estimate \eqref{eqn:magic formula} for all $\ell$, then combine the two terms for $\ell$ making $\bigl\langle \bm{C}_\eta^\ell y, \bm{C}_{\mathrm{H}} \bm{C}_\eta^\ell y\bigr \rangle$ large and then give a combined estimate independent of the choice of $\ell$.

As a first step, using that $\Delta^{(\ell)}_{j,k} \le 1$ implies \[\sum\limits_{k=\ell}^{j-\ell-1} \binom{j-1}{k}\Delta^{(\ell)}_{j,k} \le \sum\limits_{k=0}^{j-1} \binom{j-1}{k}\le 2^{j-1},\] we can estimate the last term in \eqref{eqn:magic formula} as follows: \begin{align}
\nonumber
    &\sum\limits_{j=2\ell+2}^\infty \frac{t^j}{j!} \sum\limits_{k=\ell}^{j-\ell-1} \binom{j-1}{k} \Delta^{(\ell)}_{j, k} \left\langle \left(-\bm{C}_\eta\right)^k y, \bm{C}_{\mathrm{H}} \left(-\bm{C}_\eta\right)^{j-k-1}y  \right\rangle \\
    &\qquad\le \sum\limits_{j=2\ell+2}^\infty \frac{t^j}{j!} \sum\limits_{k=\ell}^{j-\ell-1} \binom{j-1}{k} \Delta^{(\ell)}_{j, k} \|\bm{C}_\eta\|^k  \|\bm{C}_{\mathrm{H}}\| \, \|\bm{C}_\eta\|^{j-1-k}  \|y\|^2 \nonumber\\
    &\qquad \le \sum\limits_{j=2\ell+2}^\infty \frac{t^j}{j!} 2^{j-1} |\eta|^{j-1}L^{j-1} \|\bm{C}_{\mathrm{H}}\| =: R_\eta(t) \label{eqn: up tech 1}
\end{align}
 where we used $\|\bm{C}_\eta\| \le |\eta| \, L$ with some $L>0$ and $\|y\| = 1$.
 To estimate the last term in \eqref{eqn: up tech 1}, we define $f(s) := \sum\limits_{j=2\ell+2}^\infty \frac{s^j}{j!}$ and note that $f(s) \le \mathrm{e}^s \le \mathrm{e}^{2L} =: \tilde{C}$ whenever $0 < s \le 2L$. Moreover, we estimate \begin{equation}\label{eqn: sub-trick-tech}
     f(s) = \sum\limits_{j=2\ell+2}^\infty \left(\frac{s}{2L}\right)^j \frac{(2L)^j}{j!} \le \left(\frac{s}{2L}\right)^{2\ell+2} \tilde{C}
 \end{equation}
for $0 \le s \le 2L$.
Using $R_\eta(t) = \frac{1}{2L|\eta|}f(2L|\eta|t) \, \|\bm{C}_{\mathrm{H}}\|$ and plugging into \eqref{eqn: sub-trick-tech}, we find $C>0$ independent of $\eta$ such that \begin{equation}\label{eqn: sub-trick}
    R_\eta(t) \le C t^{2\ell+2} |\eta|^{2\ell+1}, \quad t \le \frac{1}{|\eta|}.
\end{equation}
To estimate the second term~\eqref{second term}, we will distinguish two cases. \emph{Case 1:} If $|\eta| \ge R$, then we choose $\ell = \ell_y$ as defined in Lemma~\ref{lem: technical 1} and estimate the second term  \eqref{second term} as follows:
\begin{align}\label{eqn: step 1}
\begin{split}
        &\frac{t^{2\ell_y+1}}{(2\ell_y+1)!}\,\binom{2\ell_y}{\ell_y}\,
       \Delta^{(\ell_y)}_{2\ell_y+1,\ell_y}
       \left\langle \bm{C}_\eta^{\ell_y} y,\; \bm{C}_{\mathrm{H}}\, \bm{C}_\eta^{\ell_y} y \right\rangle \\
       &\qquad\ge \frac{\eta^{2\ell_y}t^{2\ell_y+1}}{2(2\ell_y+1)!}\,\binom{2\ell_y}{\ell_y}\,
       \Delta^{(\ell_y)}_{2\ell_y+1,\ell_y}
       \left\langle \bm{C}_ {\mathrm{S}}^{\ell_y} y,\; \bm{C}_{\mathrm{H}}\,\bm{C}_{\mathrm{S}}^{\ell_y} y \right\rangle \ge \eta^{2\ell_y}\, t^{2\,\ell_y+1}\, \kappa_2
       \end{split}
\end{align}
with ${\kappa}_2 =  \frac{\kappa_1}{2(2\ell_y+1)!}\,\binom{2\ell_y}{\ell_y}\, \Delta^{(\ell_y)}_{2\ell_y+1,\ell_y}$.

\emph{Case 2:} If $|\eta| \le R$, then Lemma~\ref{lem: technical 1} \rm (d) implies that there is $\ell \in \{0, \dots, m_{\mathrm{HC}}\}$ with
$\left\langle \bm{C}_\eta^\ell\, y, \bm{C}_{\mathrm{H}}\, \bm{C}_\eta^\ell\, y \right\rangle \ge \eta^{2\ell} \, \frac{\mu_R}{m_{\mathrm{HC}}+1}$. It follows that
\begin{equation}\label{eqn: step 2}
    \begin{split}
        &\frac{t^{2\ell+1}}{(2\ell+1)!}\,\binom{2\ell}{\ell}\,
       \Delta^{(\ell)}_{2\ell+1,\ell}
       \left\langle \bm{C}_\eta^{\ell} y,\; \bm{C}_{\mathrm{H}}\, \bm{C}_\eta^{\ell} y \right\rangle  \ge \eta^{2\ell}\, t^{2\,\ell+1}\, {\kappa}_3
    \end{split}
\end{equation}
where ${\kappa}_3 = \frac{\mu_R}{(m_{\mathrm{HC}}+1)\, (2\ell+1)!}\,\binom{2\ell}{\ell}\Delta^{(\ell)}_{2\ell+1,\ell}$.

Combining \eqref{eqn: up tech 1}, \eqref{eqn: sub-trick}, \eqref{eqn: step 1} and \eqref{eqn: step 2}, \eqref{eqn:magic formula} yields
\begin{equation}\label{eqn: up tech 2}
    \begin{split}
       \bigr\|\mathrm{e}^{- \bm{C}_\eta t} y\bigl\|^2 - 1 \le -{\kappa}_4 \eta^{2\ell} t^{2\,\ell+1}  + 2\, C |\eta|^{2\, \ell+1}\, t^{2\ell + 2}
    \end{split}
\end{equation}
for $t\le \frac{1}{|\eta|}$, ${\kappa}_4 = \min(2\, {\kappa}_2, 2\, {\kappa}_3) > 0$ (with $\ell$ depending on $y$ and $C, {\kappa}_4$ independently of $y$).
It follows that \begin{equation}
    \bigr\|\mathrm{e}^{- \bm{C}_\eta t}y \bigl\|^2 - 1 \le -k_2' \eta^{2 \ell} \, t^{2 \ell + 1}, \quad t \le \frac{t_2}{|\eta|}
\end{equation}
where $k_2' = \frac{\kappa_4}{2}$ and $t_2>0$ is chosen is so small that $2\, C\, t_2 \le \frac{\kappa_4}{2}$. We then obtain that
\begin{align*}
    \bigr\|\mathrm{e}^{- \bm{C}_\eta t} y\bigl\|^2 - 1 \le -k_2' \eta^{2 \ell} \, t^{2 \ell + 1}
     &\le - \frac{k_2'}{t_2^{2\, (m_{\mathrm{HC}} - \ell)}} \, |\eta|^{a-1}\, t^a = - k_2 \, |\eta|^{a-1}\, t^a, \quad t \le \frac{t_2}{|\eta|}
\end{align*}
where $k_2 = \frac{k_2'}{t_2^{2m_{\mathrm{HC}}}} = \min\bigl\{  \frac{k_2'}{t_2^{2j}} \, \big|\, j = 0, \dots, m_{\mathrm{HC}} \bigr\}$ which concludes the proof.
\end{proof}

We have established a bound for each mode in Lemma~\ref{prop: many bounds}. However, since $\frac{t_2}{|\eta|}$ approaches $0$ as $|\eta| \rightarrow \infty$, this does not immediately yield a short-time asymptotic for $\|\bm{T}_E(t)\|$, because the time-scale asymptotically vanishes for high modes. But by iteratively repeating the estimate we have, the next lemma will allow us to compensate for this by exploiting that the rate increases by a factor $|\eta|^{a-1}$. Using this simple argument, we are also able to obtain the short-time asymptotic without using an explicit long-time asymptotic constructed before-hand, as is done in \cite{achleitner2025long} which is very desirable.
\begin{lemma}\label{lem: technical 2}
    Assume that $(\bm{T}(t))_{t \ge 0}$ is a~strongly continuous semigroup and $|\eta| \ge 1$ such that
    \begin{equation}\label{eqn: technical 2 4}
            \left\|\bm{T}(t)\right\|^2 \le 1 - k_2\, |\eta|^{a-1} t^a, \quad t\le \frac{t_2}{|\eta|} 
    \end{equation}
    for $a \in 2\mathbb{N}_0+1$ and constants $k_2, t_2 > 0$ independent of $\eta$.
    Then
    \[
    \|\bm{T}(t)\|^2 \le 1 - c_2\, t^a, \quad t \le \tau_2 . 
    \]
    for constants $c_2, \tau_2  > 0$ that explicitly depend on $k_2$ and $t_2$.
\end{lemma}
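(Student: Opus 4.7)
The plan is to turn the $\eta$-dependent estimate into a uniform one by combining the semigroup property with an iteration argument. First, I would observe that the hypothesis already forces $\|\bm{T}(s)\| \le 1$ for every $s \ge 0$: the given bound yields $\|\bm{T}(s)\| \le 1$ for $s \le t_2/|\eta|$, and every larger $s$ can be written as $k\,(t_2/|\eta|) + r$ with $r \in [0, t_2/|\eta|)$, so that $\|\bm{T}(s)\| \le \|\bm{T}(t_2/|\eta|)\|^k\,\|\bm{T}(r)\| \le 1$. Thus $(\bm{T}(t))_{t\ge 0}$ is a contraction semigroup, which is what allows the forthcoming iteration.

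Next, for $t \le \tau_2$ (with $\tau_2$ chosen at the end), I would split into two regimes. If $t \le t_2/|\eta|$, the hypothesis and $|\eta|^{a-1} \ge 1$ give directly $\|\bm{T}(t)\|^2 \le 1 - k_2\, |\eta|^{a-1}\, t^a \le 1 - k_2\, t^a$. If on the other hand $t > t_2/|\eta|$, I would set $n := \lfloor t|\eta|/t_2 \rfloor \ge 1$ and write $t = n\,(t_2/|\eta|) + r$ with $r \in [0, t_2/|\eta|)$. Using the semigroup property together with the hypothesis applied at $s = t_2/|\eta|$, namely $\|\bm{T}(t_2/|\eta|)\|^2 \le 1 - k_2\, t_2^a/|\eta|$, and the contraction bound $\|\bm{T}(r)\| \le 1$, I obtain $\|\bm{T}(t)\|^2 \le (1 - k_2\, t_2^a/|\eta|)^n$.

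From here, the estimates $(1-x)^n \le e^{-nx}$ and the elementary inequality $\lfloor x \rfloor \ge x/2$ for $x \ge 1$ (applied to $x = t|\eta|/t_2$) turn the exponent into $n\,k_2\,t_2^a/|\eta| \ge k_2\,t_2^{a-1}\,t/2$, which is manifestly independent of $\eta$. Restricting to $t$ small enough so that $k_2\,t_2^{a-1}\,t/2 \le 1$ and using $e^{-x} \le 1 - x/2$ on $[0,1]$ converts this into $\|\bm{T}(t)\|^2 \le 1 - k_2\,t_2^{a-1}\,t/4$. Finally, since $a \ge 1$ and $t \le \tau_2$, one has $t \ge t^a/\tau_2^{a-1}$, producing a bound of the form $1 - c_2\,t^a$. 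Explicit admissible values are $\tau_2 := \min\!\bigl(t_2,\,2/(k_2\,t_2^{a-1})\bigr)$ and $c_2 := \min\!\bigl(k_2,\,k_2\,t_2^{a-1}/(4\,\tau_2^{a-1})\bigr)$, which unify the two regimes and depend only on $k_2, t_2, a$.

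The conceptual heart of the argument — and the only real obstacle — is the trade between rate and window length: the hypothesis asserts a decay rate of order $k_2\,|\eta|^{a-1}$ on a window shrinking like $1/|\eta|$, and the iteration works precisely because the product of the per-step decrement $k_2\,t_2^a/|\eta|$ with the number of steps $n \sim t|\eta|/t_2$ is independent of $\eta$. Everything after that is bookkeeping: translating an exponential-in-$t$ decay back into the polynomial-in-$t^a$ form required by the conclusion, which is elementary because $t^a \le t\,\tau_2^{a-1}$ for $t \le \tau_2$.
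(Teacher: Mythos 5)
Your proof is correct, and it rests on the same mechanism as the paper's: iterate the submultiplicativity $\|\bm{T}(t+s)\|\le\|\bm{T}(t)\|\,\|\bm{T}(s)\|$ and exploit that the per-step decrement times the number of steps is independent of $\eta$. The difference is in how the interval is subdivided and how the $t^a$ form is recovered. The paper cuts $[0,t]$ into $n=\lfloor|\eta|\rfloor$ \emph{equal} pieces of length $t/n$, so that evaluating the hypothesis at $t/n$ gives $\bigl(1-k_2 t^a/n\bigr)^n\le \mathrm{e}^{-k_2 t^a}$ — the $t^a$ structure is preserved exactly, there is no remainder term, and the final expansion $\mathrm{e}^{-k_2 t^a}\le 1-k_2 t^a+Mt^{2a}$ yields $c_2=k_2/2$. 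You instead use steps of \emph{fixed} length $t_2/|\eta|$ plus a remainder, which forces you to first establish the contraction property $\|\bm{T}(s)\|\le 1$ (a short but necessary extra step that the paper's equal subdivision avoids), and lands you on an exponential bound that is linear in $t$, namely $\mathrm{e}^{-k_2 t_2^{a-1}t/2}$; you then convert back to the $t^a$ form via the crude inequality $t\ge t^a/\tau_2^{a-1}$ on $[0,\tau_2]$. Both routes are valid and both make the $\eta$-dependence cancel for the same reason; the paper's version is tighter in the constants (your $c_2$ carries the extra factor $t_2^{a-1}/(4\tau_2^{a-1})$), while yours has the minor virtue of not needing to reduce the non-integer case to the integer case via the floor function. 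One small point worth making explicit in your write-up: the bound $(1-x)^n\le \mathrm{e}^{-nx}$ needs $x=k_2 t_2^a/|\eta|\in[0,1]$, which holds automatically because otherwise the hypothesis would force $\|\bm{T}(t_2/|\eta|)\|^2<0$ — the paper records the analogous observation.
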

\begin{proof}
    Step 1: Assume that $n = |\eta| \in \mathbb{N}$, fix $0< t \le t_2$ and use the identity $\|\bm{T}(t+s)\| \le \|\bm{T}(t)\| \, \|\bm{T}(s)\|$ iteratively to estimate
    \begin{equation}\label{eqn: technical 2 1}
    \begin{split}
        \|\bm{T}(t)\|^2 &\le \left[\left\|\bm{T}\left(\tfrac{t}{n}\right)\right\|^2\right]^{n}= \left( 1 -k_2 \,  \tfrac{t^a}{n} \right)^{n}.
    \end{split}
    \end{equation}
    It is well-known that $0 \le 1 - s\le \mathrm{e}^{-s}$ for $s \in [0,1]$. Plugging in $s = \frac{\sigma}{n}$ for $\sigma \in [0, n]$ and taking the $n$th power, this implies
    $\left(1 - \frac{\sigma}{n}\right)^n \le \mathrm{e}^{-\sigma}$. Note that $k_2\, t_2^a \le n$ automatically holds since $\|\bm{T}(t)\|$ would be negative otherwise. Hence, we may plug $\sigma = k_2\, t^a$ into \eqref{eqn: technical 2 1} and get
    \begin{equation}\label{eqn: technical 2 2}
        \|\bm{T}(t)\|^2 \le \mathrm{e}^{-k_2\, t^a}, \quad t \le t_2.
    \end{equation}
    Step 2: Assume that $|\eta| \ge 1$ : Then  \eqref{eqn: technical 2 4} implies 
    \begin{equation*}
        \left\|\bm{T}(t)\right\|^2 \le 1 - k_2\, \lfloor|\eta|\rfloor^{a-1} t^a, \quad t\le \frac{\tau_2'}{ \lfloor |\eta| \rfloor},
    \end{equation*}
    with $\tau'_2 = \frac{t_2}{2}$ where we used the trivial identity $2\, \lfloor |\eta| \rfloor \ge |\eta|$ if $|\eta| \ge 1$. Inequality~\eqref{eqn: technical 2 2} derived in Step~1 applied with $\lfloor|\eta|\rfloor$ and plugged in for $n$ yields
    \[
    \|\bm{T}(t)\|^2 \le \mathrm{e}^{-k_2\, t^a}, \quad t \le \tau_2'.
    \]
    Finally, the claim follows since $\mathrm{e}^{-k_2\, t^a} \le 1 - k_2 \, t^a + M t^{2a}$ for $t \le \tau'_2$ and some $M>0$. Choosing $c_2 = \frac{k_2}{2}$ and $\tau_2 > 0$ so small that $M\tau_2^a \le \frac{k_2}{2}$ yields the claim.
\end{proof}

\subsection{Proof of Theorem~\ref{thm:theorem}~(b)}

\begin{proposition}\label{prop: thm b}
    Let $\bm{C} \in \mathcal{B}(\mathcal{H})$ be hypocoercive with index $m_{\mathrm{HC}} \in \mathbb{N}$ and let  $E\subset \mathbb{R}\setminus{(-1,1)}$ be non-empty and countable.
    Then the following holds:
    \begin{itemize}
        \item [\rm (a)] There are $c_2, \tau_2 > 0$ such that
    \[
    \|\bm{T}_E(t)\|^2 \le 1 - c_2 \, t^a, \quad 0 \le t\le \tau_2.
    \]
    \item [\rm (b)] There are $c_1, \tau_1 > 0$ such that
    \[
    1 - c_1 \, t^a \le \|\bm{T}_E(t)\|^2, \quad 0\le t\le \tau_1.
    \]
    \end{itemize}
\end{proposition}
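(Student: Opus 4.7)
The plan is to handle (a) and (b) separately, leveraging that $\bm{T}_E(t)$ acts diagonally on the direct sum.

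For part (a), the starting observation is that $\bm{T}_E(t) = \bigoplus_{\eta \in E} \mathrm{e}^{-\bm{C}_\eta t}$, so for every $x = (x_\eta)_{\eta \in E} \in \mathcal{X}$ one has
\[
\|\bm{T}_E(t) x\|^2 = \sum_{\eta \in E} \bigl\|\mathrm{e}^{-\bm{C}_\eta t} x_\eta\bigr\|^2.
\]
Lemma~\ref{prop: many bounds} supplies, for every $\eta \in E$, the mode-wise bound $\|\mathrm{e}^{-\bm{C}_\eta t}\|^2 \le 1 - k_2 |\eta|^{a-1} t^a$ on the $\eta$-dependent interval $t \le t_2/|\eta|$, with $k_2, t_2$ independent of $\eta$. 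This matches exactly the hypothesis of Lemma~\ref{lem: technical 2}, which I would apply separately to each semigroup $(\mathrm{e}^{-\bm{C}_\eta t})_{t \ge 0}$ to promote the mode-wise estimate to a uniform bound $\|\mathrm{e}^{-\bm{C}_\eta t}\|^2 \le 1 - c_2 t^a$ for $t \le \tau_2$, with $c_2, \tau_2$ depending only on $k_2, t_2$ and therefore independent of $\eta$. Summing the resulting pointwise estimate over $\eta$ yields $\|\bm{T}_E(t) x\|^2 \le (1 - c_2 t^a)\|x\|^2$, which is the claim of (a).

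For part (b), I would fix any $\eta_0 \in E$ (which exists by non-emptiness) and restrict $\bm{T}_E(t)$ to vectors supported solely on the $\eta_0$-coordinate. Since that invariant subspace is unitarily identified with $\mathcal{H}$ and the restricted semigroup is precisely $\mathrm{e}^{-\bm{C}_{\eta_0} t}$, this gives
\[
\|\bm{T}_E(t)\|_{\mathcal{B}(\mathcal{X})} \ge \|\mathrm{e}^{-\bm{C}_{\eta_0} t}\|_{\mathcal{B}(\mathcal{H})}.
\]
Because $\bm{C}_{\eta_0}$ is bounded, accretive, and hypocoercive with the same index $m_{\mathrm{HC}}$, the short-time lower bound for bounded hypocoercive semigroups in the sense of~\eqref{eqn: intro short} established in \cite{achleitner2025hypocoercivity} applies and produces constants $c_1, \tau_1 > 0$ with $\|\mathrm{e}^{-\bm{C}_{\eta_0} t}\|^2 \ge 1 - c_1 t^a$ on $t \le \tau_1$, which completes the proof.

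The main obstacle for (a) is essentially bookkeeping: one has to check that the constants emerging from Lemma~\ref{lem: technical 2} truly carry over independently of $\eta$, which is already built into the formulation but requires verification. The lower bound in (b) is less routine, because the bounded-case single-mode lower bound, if one had to reprove it from scratch, would require applying the magic formula~\eqref{eqn:magic formula} along a judiciously chosen vector $y_* \in \mathcal{H}$ whose iterates $\bm{C}_{\eta_0}^j y_*$ lie in $\ker \bm{C}_{\mathrm{H}}$ for $j = 0, \dots, m_{\mathrm{HC}}-1$, so that all negative contributions of order $t^{2j+1}$ with $j < m_{\mathrm{HC}}$ vanish and only the leading $t^a$-contribution survives; the existence of (at least approximate) such $y_*$ is guaranteed by the minimality of $m_{\mathrm{HC}}$ in the hypocoercivity definition.
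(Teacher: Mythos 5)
Your proof is correct and follows essentially the same route as the paper: part (a) combines Lemma~\ref{prop: many bounds} with Lemma~\ref{lem: technical 2} applied mode-wise (the paper phrases the final step as a supremum over $\eta$ rather than a sum, but this is equivalent), and part (b) restricts to a single mode $\eta_0$ and invokes the bounded-case short-time lower bound from \cite{achleitner2025hypocoercivity}, exactly as the paper does via its Theorem~4.10.
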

\begin{proof}
    First considering \rm (a), we have already seen in Lemma~\ref{prop: many bounds} that there are $k_2, t_2>0$ such that
    \[
    \bigr\|\mathrm{e}^{- \bm{C}_\eta t} \bigl\|^2 \le 1 - k_2 \,|\eta|^{a-1}\, t^a, \quad t\le \frac{t_2}{|\eta|}.
    \]
    Using Lemma~\ref{lem: technical 2}, this implies that there are $c_2, \tau_2 > 0$ such that
    \[
    \bigr\|\mathrm{e}^{- \bm{C}_\eta t} \bigl\|^2 \le 1 - c_2 t^a, \quad t\le \tau_2
    \]
    for all $\eta \in E$. But then
    \begin{align*}
        \|\bm{T}_E(t)\|^2 = \sup\limits_{\eta \in E} \bigr\|\mathrm{e}^{- \bm{C}_\eta t} \bigl\|^2 \le 1 - c_2 t^a, \quad t\le \tau_2.
    \end{align*}
    To prove \rm (b), use Theorem~4.10 of \cite{achleitner2025hypocoercivity} for an~$\eta_0 \in E$, to obtain
    \[
    1 - c_1\, t^a \le \|\mathrm{e}^{-\bm{C}_{\eta_0}t} \|^2, \quad t \le \tau_1
    \]
    for some $c_1, \tau_1 > 0$. Since $\|\bm{T}_E(t)\| = \sup\limits_{\eta \in E}\bigr\|\mathrm{e}^{- \bm{C}_\eta t} \bigl\|$, we obtain the claim.
\end{proof}

Using Propositions~\ref{prop: thm b}, there are $\tau_1, \tau_2, c_1, c_2 > 0$ such that for $\tau := \min (\tau_1, \tau_2)$ we obtain \eqref{short_time}, which finishes the proof of Theorem~\ref{thm:theorem}~(b).

\section{Application to port-Hamiltonian systems}\label{sec: app}

In \cite{roschkowski2025energymethodsdistributedporthamiltonian} it is shown that for distributed-parameter pH systems with boundary dissipation from~\cite{jacob2012linear}, there is no short-time asymptotic~\eqref{eqn: intro short}. Here, we consider a class of pH systems with distributed dissipation, which is based on~\cite{philipp2021minimizing, gernandt2024stability}
\begin{equation}\label{eqn: Hamiltonian}
    \frac{\partial }{\partial t} x(t, \zeta) =\bm{A}x(t,\zeta)=\left( \bm{P}_1 \frac{\partial}{\partial \zeta} - \bm{R}\right)\bm{H}x(t, \zeta)
\end{equation}
in the Hilbert space 
\[
\mathcal{V} = L^2([0,2\pi];\mathbb{C}^n),\quad \left\langle x, y \right\rangle = \int\limits_0^{2\pi} x(\zeta)^* \bm{H} y(\zeta) \, \mathrm{d}\zeta
\]
with solutions $x(t,\cdot)$ in the Sobolev space $H^1([0, 2\pi]; \mathbb{C}^n)$. 
       Furthermore, assume that   
        $\bm{P}_1 \in \mathbb{C}^{n\times n}$ is Hermitian and invertible,  $\bm{R}\in \mathbb{C}^{n\times n}$ is  positive semi-definite, $\bm{H}\in \mathbb{C}^{n\times n}$ is positive definite. Moreover, we assume that the matrix $\bm{C} = (\bm{R} - i \bm{P}_1)\bm{H}\in \mathbb{C}^{n\times n}$ is hypocoercive on the Hilbert space $\mathcal{H} = \mathbb{C}^n$ with respect to the inner product  $\left\langle v, w\right\rangle_{\mathcal{H}} := v^* \bm{H}w$ for all $v, w \in \mathcal{H} = \mathbb{C}^n$ and $\bm{C}$ has index  $m_{\mathrm{HC}}$. In particular, $i\bm{P}_1\bm{H}$ is skew-Hermitian while $\bm{R}\bm{H}$ is Hermitian on  $\mathcal{H}$. Moreover, we assume that~\eqref{eqn: Hamiltonian} has periodic boundary conditions, i.e.\ \begin{align}
        \label{eq:periodic_bc}
        D(\bm{A}) = \{ x \in H^1([0, 2\pi]; \mathbb{C}^n) \,  | \, x(0) = x(2\pi) \}.\end{align}
The periodic boundary condition~\eqref{eq:periodic_bc} ensures that $\bm{A}$ generates a strongly continuous semigroup $(\bm{T}(t))_{t\geq 0}$, see \cite{jacob2012linear}, and more importantly, 
we can orthogonally decompose \begin{align*}
x(t, \zeta) = \sum\limits_{m = -\infty}^\infty z_m(t) \varphi_m(\zeta), \quad \varphi_m(\zeta) = \mathrm{e}^{i m \zeta}\in L^2([0, 2\pi])\end{align*} with $z_m(t) \in \mathbb{C}^n$. Then \eqref{eqn: Hamiltonian} becomes
\begin{equation}\label{eqn: exm}
    \frac{\mathrm{d}}{\mathrm{d}t} z_m(t) = -\bm{C}_m z_m(t),\quad m \in \mathbb{Z},
\end{equation}
where $\bm{C}_{\mathrm{S}} = -i \bm{P}_1 \bm{H}$, $\bm{C}_{\mathrm{H}} = \bm{R}\bm{H}$, $\bm{C}_m = m\, \bm{C}_{\mathrm{S}} + \bm{C}_{\mathrm{H}}$. 
Note that by considering the Hermitian part, the set of equilibria of \eqref{eqn: Hamiltonian} is described by the closed subspace
\begin{align*}
    \mathcal{V}_\infty := \{ x &\in L^2([0, 2\pi]; \mathbb{C}^n)\ |\ x \text{ is constant and } x(\zeta) \in \ker\bm{R}\bm{H} \text{ almost everywhere} \}
\end{align*}
and let $\bm{Q}_\infty$ denote the projection onto this space.

In the following proposition, we  show the exponential convergence towards the subspace of equilibria and the corresponding short-time behavior, see also \cite{gernandt2024stability} for a related result for more general pH systems.
\begin{proposition}
 Consider a pH system~\eqref{eqn: Hamiltonian}, then the following holds:
    \begin{itemize}
        \item [\rm (a)] There are constants $\tau, c_1, c_2 > 0$ and $a = 2 m_{\mathrm{HC}} + 1$ such that
        \begin{equation}
        \label{short-time-pH}
            1 - c_1 t^a \le \|\bm{T}(t) - \bm{Q}_\infty \|^2 \le 1 - c_2 t^a,\ t\le \tau.
        \end{equation}
        \item [\rm (b)] The subspaces $\mathcal{V}_\infty, \mathcal{V}_\infty^\perp \subset \mathcal{V}$ are invariant for $(\bm{T}(t))_{t\ge 0}$ and there are constants $C, \omega > 0$ such that \[
        \|\bm{T}(t) - \bm{Q}_\infty \| \le C \mathrm{e}^{-\omega t},\quad  t \geq  0.
        \]
        \item [\rm (c)]  Moreover, with $x=\sum_{m=-\infty}^\infty z_m\varphi_m$ the operator
        \begin{align*}
            \bm{P}x &= \sum\limits_{m \in \mathbb{Z}\setminus\{0\}} \sum\limits_{j = 0}^{m_{\mathrm{HC}}}\frac{1}{m^{2j}} (\bm{C}_m^*)^j \bm{C}_m^j \, z_m\, \varphi_m  + z_0\, \varphi_0
        \end{align*}
        is bounded, self-adjoint and satisfies
        \[
        \langle \bm{A}\, x, \bm{P}\, x\rangle + \langle \bm{P}\, x, \bm{A}\, x\rangle \le - \sigma\|x\|^2, \ x \in D(\bm{A}) \cap \mathcal{V}_\infty^\perp
        \] for some $\sigma > 0$ and $\bm{P} \ge \bm{1}_{\mathcal{V}}$.
    \end{itemize}
\end{proposition}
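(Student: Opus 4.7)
The approach is to use the Fourier decomposition~\eqref{eqn: exm} to put the problem into the framework of Theorem~\ref{thm:theorem}, while isolating the zero Fourier mode, which is excluded there (since $0 \notin \mathbb{R}\setminus(-1,1)$) and whose kernel $\ker(\bm{R}\bm{H})$ is precisely where the equilibria live. Accordingly, the first step is to split
\[
\mathcal{V} = \mathcal{V}_\infty \;\oplus\; \mathcal{V}_{0,\perp} \;\oplus\; \mathcal{V}_{\ne 0},
\]
where $\mathcal{V}_{0,\perp}$ consists of the constants with values in $(\ker \bm{R}\bm{H})^{\perp_{\bm{H}}} \subset \mathbb{C}^n$, and $\mathcal{V}_{\ne 0}$ is the closed linear span of $\{z\,\varphi_m \,:\, m \ne 0,\; z \in \mathbb{C}^n\}$. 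This decomposition is orthogonal in $\mathcal{V}$ (constants in $\mathcal{H}$-orthogonal subspaces are $\mathcal{V}$-orthogonal, and distinct Fourier modes are $\mathcal{V}$-orthogonal), and each summand is $(\bm{T}(t))$-invariant: $\mathcal{V}_\infty \subset \ker \bm{A}$ by construction; the decoupling of modes in~\eqref{eqn: exm} yields invariance of $\mathcal{V}_{\ne 0}$; and $\mathcal{V}_{0,\perp}$ is invariant because $\bm{R}\bm{H}$ is $\bm{H}$-self-adjoint and hence preserves $\ker(\bm{R}\bm{H})$ and its $\bm{H}$-orthogonal complement. On these three summands the semigroup acts, respectively, as the identity, as the finite-dimensional exponential $e^{-\bm{R}\bm{H}t}$ restricted to $(\ker \bm{R}\bm{H})^{\perp_{\bm{H}}}$ (on which $\bm{R}\bm{H}$ is strictly positive with smallest $\bm{H}$-eigenvalue $\lambda_{\min}>0$), and, via the unitary Fourier isomorphism, as the semigroup $(\bm{T}_E(t))_{t\ge 0}$ of Theorem~\ref{thm:theorem} with $E = \mathbb{Z}\setminus\{0\}$.

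With this in hand, part~(b) follows by combining two exponential bounds: on $\mathcal{V}_{\ne 0}$ from Theorem~\ref{thm:theorem}(a) and on $\mathcal{V}_{0,\perp}$ from $\|e^{-\bm{R}\bm{H}t}|_{(\ker \bm{R}\bm{H})^{\perp_{\bm{H}}}}\| \le e^{-\lambda_{\min} t}$; since $\bm{T}(t)-\bm{Q}_\infty$ vanishes on $\mathcal{V}_\infty$, the operator norm on $\mathcal{V}$ is the maximum of the norms on the other two invariant summands, giving $\|\bm{T}(t)-\bm{Q}_\infty\| \le C e^{-\omega t}$. For the upper bound in~(a), Theorem~\ref{thm:theorem}(b) yields $\|\bm{T}(t)|_{\mathcal{V}_{\ne 0}}\|^2 \le 1 - c_2 t^a$ for small $t$, while the elementary estimate $e^{-2\lambda_{\min} t} \le 1 - c_2' t^a$ holds for small $t$ (the left-hand side equals $1 - 2\lambda_{\min} t + O(t^2)$ and $a \ge 1$). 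Orthogonality of the invariant decomposition then lifts both mode-wise bounds to the operator norm on $\mathcal{V}_\infty^\perp$. For the lower bound, pick any $\eta_0 \in \mathbb{Z}\setminus\{0\}$ and restrict to that single mode: Theorem~4.10 of \cite{achleitner2025hypocoercivity} applied to the hypocoercive matrix $\bm{C}_{\eta_0}$ gives $1 - c_1 t^a \le \|e^{-\bm{C}_{\eta_0}t}\|^2 \le \|\bm{T}(t)-\bm{Q}_\infty\|^2$.

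For part~(c), the operator $\bm{P}$ in the statement is exactly $\bm{P}_E \oplus \bm{1}_{\mathcal{V}_\infty \oplus \mathcal{V}_{0,\perp}}$ with respect to the decomposition $\mathcal{V}_{\ne 0} \oplus (\mathcal{V}_\infty \oplus \mathcal{V}_{0,\perp})$, where $\bm{P}_E$ is the Lyapunov operator from Theorem~\ref{thm:theorem}(a) with $E = \mathbb{Z}\setminus\{0\}$; boundedness, self-adjointness and $\bm{P} \ge \bm{1}_{\mathcal{V}}$ are inherited. For the dissipation inequality on $D(\bm{A}) \cap \mathcal{V}_\infty^\perp$, decompose $x = x_{\ne 0} + z_0 \varphi_0$ with $z_0 \in (\ker \bm{R}\bm{H})^{\perp_{\bm{H}}}$; the $\mathcal{V}_{\ne 0}$ contribution is $\le -\sigma_E \|x_{\ne 0}\|_{\mathcal{V}}^2$ directly from Theorem~\ref{thm:theorem}(a), while a direct calculation for the $m=0$ mode yields $-4\pi\langle \bm{R}\bm{H}z_0, z_0\rangle_{\mathcal{H}} \le -2\lambda_{\min}\,\|z_0\varphi_0\|_{\mathcal{V}}^2$; taking $\sigma = \min(\sigma_E, 2\lambda_{\min})$ finishes the argument. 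The only genuine obstacle, which is conceptual rather than technical, is setting up the invariant splitting at $m = 0$ correctly: one must peel off both the zero mode (excluded from $E$ in Theorem~\ref{thm:theorem}) and, within it, the equilibrium subspace $\ker(\bm{R}\bm{H})$. Once this orthogonal decomposition is in place, the remaining arguments are either direct applications of Theorem~\ref{thm:theorem} or elementary finite-dimensional spectral computations.
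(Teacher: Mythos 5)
Your proposal is correct and follows essentially the same route as the paper: the same orthogonal invariant splitting into the equilibria, the zero-mode complement $(\ker\bm{R}\bm{H})^{\perp_{\bm{H}}}$ of constants, and the nonzero Fourier modes handled by Theorem~\ref{thm:theorem} with $E=\mathbb{Z}\setminus\{0\}$, with the zero mode controlled by coercivity of $\bm{C}_{\mathrm{H}}$ on $(\ker\bm{C}_{\mathrm{H}})^{\perp}$ and the lower bound taken from a single nonzero mode. The only cosmetic difference is that you make the three-way decomposition and the finite-dimensional spectral estimate $\mathrm{e}^{-2\lambda_{\min}t}\le 1-c_2't^a$ fully explicit, which the paper leaves partly implicit.
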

\begin{proof}
   As a consequence of the modal decomposition, we find that $\mathcal{V}_\infty = \ker \bm{A}$, $\mathcal{X}:=\operatorname{span}(\varphi_0e_1, \dots, \varphi_0e_n)^\perp$,  
   $\mathcal{V}_\infty^\perp$, and  
   are invariant for $(\bm{T}(t))_{t\ge 0}$ where $e_1, \dots, e_n$ denote the canonical basis vectors of $\mathbb{C}^n$.
To prove \rm(a), note that part \rm(b) of Theorem~\ref{thm:theorem} can be applied to
    \begin{equation*}
    \frac{\mathrm{d}}{\mathrm{d}t} z_m(t) = -\bm{C}_m z_m(t),\quad m \in \mathbb{Z}\setminus\{0\}
\end{equation*}
where the parameters $\eta$ correspond to $m$ and the set $E = \mathbb{Z}\setminus \{0\}$.
It follows that the restriction of $\bm{T}(t)$ onto $\mathcal{X}$ is the semigroup $\bm{T}_E(t)$ as in Theorem~\ref{thm:theorem} and satisfies the short-time estimate \eqref{short_time}.
Now take $y \in \mathcal{X}^\perp \cap (\mathcal{V}_\infty)^\perp$ and compute 
\begin{equation}\label{eqn: lyapunov constant}
\begin{split}
    \langle \bm{A} y, y\rangle + \langle y, \bm{A}y\rangle \le -2 \kappa \|y\|^2
\end{split}
\end{equation}
where $\kappa > 0$ is chosen so that $\left\langle v, \bm{C}_{\mathrm{H}}v\right\rangle_{\mathcal{H}} \ge \kappa \|v\|^2_{\mathcal{H}}$ for $v \in \mathcal{H} \cap {\ker \bm{C}_{\mathrm{H}}}^\perp$.
This directly implies\begin{equation*}
    \| x(t) - \bm{Q}_\infty x_0\|^2 \le \mathrm{e}^{-\kappa t}\,  \| x_0 - \bm{Q}_\infty x_0\|^2 
\end{equation*}
for all solutions $x \in \mathcal{X}^\perp$ with $x(0) = x_0 \in \mathcal{X}^\perp$.

We may choose $0<\tau\le\tilde{\tau}$ so small that $\mathrm{e}^{-\kappa t} \le 1 - c_2\, t^a$ which yields
\begin{equation}\label{eqn: constant}
    \|x(t) - \bm{Q}_\infty x_0 \|^2 \le (1 -  c_2 t^a) \, \|x_0 - \bm{Q}_\infty x_0\|^2
\end{equation}
for $0\le t\le \tau$.
Taking a solution $x(t)  = x_1(t) + x_2(t)$, where $x_1(t) \in \mathcal{X} \cap D(\bm{A})$, $x_2(t) \in \mathcal{X}^\perp \cap D(\bm{A})$ of \eqref{eqn: Hamiltonian} and $x(0) = x_0$, we find for $t \le \tau$ that
\begin{equation*}
    \begin{split}
        \|x(t) - \bm{Q}_\infty x(t)\|^2 & =\|x_1(t) \|^2 + \left\|x_2(t) - \bm{Q}_\infty x_0 \right\|^2 \le 1 - c_2 t^a, \quad 
    \end{split}
\end{equation*}
where we combined  \eqref{short_time} and \eqref{eqn: constant}. The corresponding lower bound
\[
1 - c_1 t^a \le \|\bm{T}(t) - \bm{Q}_\infty \|^2, \quad t \le \tau
\]
follows from \eqref{short_time}. To prove (c), fix some $y \in D(\bm{A}) \cap \mathcal{V}_\infty^\perp$ and decompose $y = y_1 + y_2$ with $y_1 \in \mathcal{X}$, $y_2 \in \mathcal{X}^\perp\cap \mathcal{V}_\infty^\perp$. It then follows that
\begin{align}\label{eqn: thm long used}
    &\langle \bm{A}\, y_1, \bm{P}\, y_1\rangle + \langle \bm{P}\, y_1, \bm{A}\, y_1\rangle \le -\tilde{\sigma} \|y_1\|^2 
\end{align}
for some $\tilde{\sigma} > 0$
by item (a) of Theorem~\ref{thm:theorem}. Combining \eqref{eqn: lyapunov constant} with $y_2$ plugged in for $y$ and \eqref{eqn: thm long used}, it follows that
\[
        \langle \bm{A}\, y_2, \bm{P}\, y_2\rangle + \langle \bm{P}\, y_2, \bm{A}\, y_2\rangle \le - \sigma\|y\|^2
        \]
for $\sigma = \min(\tilde{\sigma}, 2\, \kappa)$. Moreover, $\bm{P}$ is bounded, self-adjoint and satisfies $\bm{P} \ge \bm{1}_{\mathcal{X}}$ by the same proposition. Since $\bm{P}|_{\mathcal{X}^\perp} = \bm{1}_{\mathcal{X}^\perp}$, it follows that $\bm{P} \ge \bm{1}_{\mathcal{V}}$.
\end{proof}

\section{Conclusion}
In this paper, we study the long- and short-time behavior of semigroups generated by a class of unbounded operators admitting a direct sum decomposition that might be obtained from modal decomposition. Our approach allows for an arbitrary hypocoercivity index of the generating operator and does not require a long-time bound to derive the short-time bound. We applied our results to a class of port-Hamiltonian (pH) systems with distributed damping.  
Future work will include long- and short-time analysis of distributed pH systems under more complicated boundary conditions.

\bibliographystyle{plain}
\bibliography{references}      

\end{document}